\newtheorem{theorem}{Theorem}[section]
\newtheorem{lemma}[theorem]{Lemma}
\numberwithin{equation}{section}
\newtheorem{corollary}[theorem]{Corollary}
\newcommand{\cw}{\textbf{cop-win}}
\newcommand{\kcw}{\textbf{$k$-cop-win}}
\newcommand{\universal}{\textbf{universal}}
\newcommand{\kd}{\textbf{$k$-dom}}
\newcommand{\N}{{\mathbb N}}
\newcommand{\Z}{{\mathbb Z}}
\newcommand{\R}{{\mathbb R}}
\newcommand{\eps}{\varepsilon}
\newcommand{\E}{\mathbb E}
\newcommand{\Bin}{\mathrm{Bin}}
\newcommand{\prob}[2][]{\mathbb{P}#1\left(#2#1\right)}
\begin{document}

\title{Almost all $k$-cop-win graphs contain a dominating set of cardinality $k$}

\author{Pawe{\l} Pra{\l}at}
\address{Department of Mathematics, Ryerson University, Toronto, ON, Canada, M5B 2K3}
\email{pralat@ryerson.ca}

\keywords{$k$-cop-win graph, random graphs}
\subjclass[2000]{05C80, 05C57, 05C30}
\thanks{The author gratefully acknowledges support from NSERC and Ryerson}

\begin{abstract}
We consider $k$-cop-win graphs in the binomial random graph $G(n,1/2).$ It is known that almost all cop-win graphs contain a universal vertex. We generalize this result and prove that for every $k \in \N$, almost all $k$-cop-win graphs contain a dominating set of cardinality $k$. From this it follows that the asymptotic number of labelled $k$-cop-win graphs of order $n$ is equal to $(1+o(1)) (1-2^{-k})^{-k} {n \choose k} 2^{n^2/2 - (1/2-\log_2(1-2^{-k})) n}$.
\end{abstract}

\maketitle

\section{Introduction}

\emph{Cops and Robbers} is a vertex-pursuit game played on a reflexive graph. There are two players consisting of a set of \emph{cops} and a single \emph{robber}. The game is played over a sequence of discrete time-steps or \emph{rounds}, with the cops going first in each round. The cops and robber occupy vertices. When a player is ready to move in a round, they must move to a neighbouring vertex. Because of the loops, players can \emph{pass}, or remain on their own vertex. Observe that any subset of cops may move in a given round. The cops win if after some finite number of rounds, one of them can occupy the same vertex as the robber. This is called a \emph{capture}. The robber wins if he can evade capture indefinitely. A \emph{winning strategy for the cops} is a set of rules that, if followed, result in a win for the cops. A \emph{winning strategy for the robber} is defined analogously.

If we place a cop at each vertex, then the cops are guaranteed to win. Therefore, the minimum number of cops required to win in a graph $G$ is a well-defined positive integer, named the \emph{cop number} (or \emph{copnumber}) of the graph $G.$ We write $c(G)$ for the cop number of a graph $G$. If $c(G)\le k,$ then we say $G$ is $k$-\emph{cop-win}. In the special case $k=1,$ we say $G$ is \emph{cop-win} (or \emph{copwin}). Nowakowski and Winkler~\cite{nw}, and independently Quilliot~\cite{q}, considered the game with one cop only; the introduction of the cop number came in~\cite{af}. Many papers have now been written on cop number since these three early works; for more, see the monograph~\cite{bn}.

\bigskip

Since their introduction, the structure of cop-win graphs has been relatively well-understood. In~\cite{nw,q,q2} a kind of ordering of the vertex set---now called a cop-win or elimination ordering---was introduced which completely characterizes such graphs. If $u$ is a vertex, then the \emph{closed neighbour set of} $u,$ written $N[u],$ consists of $u$ along with the neighbours of $u.$ A vertex $u$ is a \emph{corner} if there is some vertex $v$, $v \neq u$, such that $N[u]\subseteq N[v].$ A graph is \emph{dismantlable} if some sequence of deleting corners results in the graph with a single vertex. For example, each tree is dismantlable, and more generally, so are chordal graphs (that is, graphs with no induced cycles of length more than three). To prove the latter fact, note that every chordal graph contains a vertex whose neighbour set is a clique; see, for example,~\cite{west}.

The following theorem gives the main results characterizing cop-win graphs.
\begin{theorem}\cite{nw,q,q2} \label{dis}
\begin{enumerate}
\item If $u$ is a corner of a graph $G,$ then $G$ is cop-win if and only if $G-u$ is cop-win.
\item A graph is cop-win if and only if it is dismantlable.
\end{enumerate}
\end{theorem}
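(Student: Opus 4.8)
The plan is to prove part (1) by two strategy-stealing (``shadow'') arguments and then obtain part (2) by induction on $|V(G)|$, using part (1) at each step. Throughout I would exploit that the graphs are reflexive, so a player may pass, and that the cops move first in each round.

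For the forward direction of part (1), suppose $G$ is cop-win and let $v \neq u$ be a vertex with $N[u] \subseteq N[v]$. The map $f \colon V(G) \to V(G-u)$ fixing every vertex except $u$ and sending $u \mapsto v$ is a retraction of reflexive graphs: any edge incident to $u$, say $uw$, has $w \in N[u] \subseteq N[v]$, so $f(u)f(w) = vw$ is again an edge or a loop. A robber confined to $G-u$ is also a legal robber in $G$, so I would let the cop run its winning $G$-strategy against this robber but actually play the $f$-image of each prescribed position. Because $f$ is a homomorphism the projected cop walk is legal in $G-u$, and when the $G$-cop captures the robber at a vertex $r \neq u$ we have $f(r) = r$, so the projected cop captures as well. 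For the backward direction, suppose $G-u$ is cop-win and have the cop track a shadow robber in $G-u$: the shadow sits on the real robber unless the robber is on $u$, in which case the shadow sits on $v$. The inclusion $N[u] \subseteq N[v]$ guarantees consecutive shadow positions are adjacent or equal, so the shadow is a legal $(G-u)$-robber and the cop catches it. If the shadow is caught off $u$ the real robber is caught, and if the cop sits on $v$ while the robber sits on $u$, then every robber move goes into $N[u] \subseteq N[v]$, so the cop captures on the following move.

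Part (2) then follows by induction. The easy direction is dismantlable $\Rightarrow$ cop-win: a single looped vertex is trivially cop-win, and if $G$ has a corner $u$ with $G-u$ dismantlable, then $G-u$ is cop-win by induction and hence $G$ is cop-win by part (1). For the converse I must show that every cop-win graph on at least two vertices has a corner; granting this, part (1) gives that $G-u$ is cop-win for the corner $u$, induction makes $G-u$ dismantlable, and deleting $u$ first dismantles $G$. To produce a corner, I would fix a winning cop strategy and let the robber play to maximise the capture time, which is finite since the cop wins and well defined on the finite configuration space. Looking at the configuration $(c,r)$ just before the robber's final move---cop at $c$, robber at $r$, robber to move, with $c \neq r$ since the robber is not yet caught---optimality of the robber forces every move $r \to r'$ with $r' \in N[r]$ to be captured on the next cop step, i.e.\ $r' \in N[c]$. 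Hence $N[r] \subseteq N[c]$ and $r$ is a corner.

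The main obstacle is exactly this last step, the existence of a corner in an arbitrary cop-win graph: unlike the chordal case there is no a priori combinatorial witness, so one must read the corner out of the game dynamics. The delicate point is to argue cleanly that optimality of the robber's play makes the penultimate configuration a genuine trap, with all of $N[r]$ contained in $N[c]$, rather than merely recording the one move taken in a single playthrough; this is where the well-definedness of capture time on a finite state space does the work. The remaining care is bookkeeping: handling loops so that ``adjacent or equal'' is treated uniformly, and keeping the order of moves (cops first) straight in both shadow arguments.
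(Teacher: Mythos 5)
The paper does not actually prove this theorem: it is quoted from the cited works of Nowakowski--Winkler and Quilliot, so there is no internal proof to compare against. Your argument, however, is a correct and essentially complete self-contained proof, and it follows the classical route. In part (1), the forward direction is the standard retract argument (the map $f$ collapsing $u$ onto its dominating vertex $v$ is a homomorphism of reflexive graphs exactly because $N[u] \subseteq N[v]$, and since the robber never visits $u$, the capture vertex is fixed by $f$), and the backward direction is the standard shadow-robber argument, including the correct endgame observation that if the cop sits on $v$ while the robber sits on $u$, every robber move lands in $N[u] \subseteq N[v]$ and is caught. In part (2), extracting a corner from the penultimate configuration of optimal play is the standard proof that cop-win implies dismantlable, and your justification is sound: since the robber maximizes the (finite, well-defined by backward induction on the finite state space) capture time, \emph{every} move $r \to r' \in N[r]$, not just the move actually played, must lead to immediate capture, forcing $N[r] \subseteq N[c]$ with $c \neq r$.

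One small degenerate case deserves a patch: under optimal play the robber may be captured before making any move (for instance, when the cop starts on a universal vertex), in which case ``the configuration just before the robber's final move'' does not exist as you phrased it. The fix is to treat the robber's initial placement as its first move: if every placement $r_0 \neq c_0$ leads to capture on the cop's first step, then $V \setminus \{c_0\} \subseteq N(c_0)$, so $c_0$ is universal and any other vertex is a corner. With that one-line adjustment your proof is complete.
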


\bigskip

We say that an event holds \emph{asymptotically almost surely} (\emph{a.a.s.}), if it holds with probability tending to one as $n$ tends to infinity. The probability of an event $A$ is denoted by $\prob{A}.$

Our goal is to investigate the structure of \emph{random cop-win }graphs. The random graph model we use is the familiar $G(n,1/2)$ probability space of all labelled graphs on $n$ vertices where each pair of vertices is joined with probability $1/2$, independent from the events for other pairs of vertices. Note that a given (labelled) graph $G$ on $n$ vertices occurs with probability
$$
\prob{G \in G(n,1/2)} = \left( \frac 12 \right)^{|E(G)|} \left(1- \frac 12 \right)^{{n \choose 2} - |E(G)|} = \left( \frac 12 \right)^{n \choose 2},
$$
which does not depend on $G$. Thus, $G(n,1/2)$ is in fact a uniform probability space over all labelled graphs on $n$ vertices. We heavily use this interpretation of $G(n,1/2)$ in the proof of our main result, Theorem~\ref{maincw}, stated below. We expect results analogous to Theorem~\ref{maincw} (that is, with 2 replaced by $1/p$) for $G(n,p)$, a natural generalization of $G(n,1/2)$, for other constants $p\in (0,1)$ and $p=p(n)$ tending to zero as $n \to \infty$. (The argument for $p=p(n)$ tending to one needs to be modified when the expected number of dominating sets of cardinality $k$ is $\Omega(1)$; see~\cite{Pawel}.) However, this seems not to be an interesting research direction in the theory of random graphs, where we usually focus on investigating typical properties that hold a.a.s.\ in $G(n,p)$. Therefore, studying bounds for the cop number that hold a.a.s.\ are of interest, and a number of papers have been published on this topic (see, for example~\cite{bkl, BPW, LP, pw, pw2}). We focus on $G(n,1/2)$ in this paper since it gives us a tool to investigate the typical structure of a $k$-cop-win graph. Therefore, from now on our probability space is always taken to be $G(n,1/2).$ 

\section{Main results}

A vertex set $S \subseteq V$ is a \emph{dominating set} for a graph $G = (V,E)$ if every vertex not in $S$ is adjacent to at least one member of $S$. A vertex $v \in V$ is \emph{universal} if it is joined to all others (that is, if $S=\{v\}$ is a dominating set). For a given $k \in \N$, let \kcw{} be the event that the graph is $k$-cop-win, let \kd{} be the event that there is a dominating set of cardinality $k$, and let \universal{} be the event that there is a universal vertex.  If a graph has a dominating set of cardinality $k$, then it is clearly $k$-cop-win. Hence, in a certain sense, graphs with dominating vertices of cardinality $k$ are the simplest $k$-cop-win graphs. The probability that a random graph is $k$-cop-win can be estimated as follows:
\begin{equation}
\prob{\mathrm{\kcw{} }} \ge
\prob{\kd{} } = (1+o(1)) \left( 1-2^{-k} \right)^{-k} {n \choose k} \left( 1-2^{-k} \right)^{n}.\label{lowerBound}
\end{equation}
Indeed, for any $S \subseteq V = \{1, 2, \ldots, n\}$ of cardinality $k$, let $A_S$ denote the event that $S$ is a dominating set. By the union bound,
$$
\prob{\kd{} } = \prob{\bigcup_{S} A_S} \le \sum_{S} \prob{A_S} = {n \choose k} \left( 1-2^{-k} \right)^{n-k}.  
$$ 
On the other hand, it follows from Bonferroni's inequality that
$$
\prob{\kd{} } = \prob{\bigcup_{S} A_S} \ge \sum_{S} \prob{A_S} - \sum_{S,T, S \neq T} \prob{A_S \cap A_T}.  
$$ 
Let $S,T \subseteq V$ be such that $|S|=|T|=k$ and $|S \cap T| = \ell$ for some $0 \le \ell < k$. The probability that a vertex $v \in V \setminus (S \cup T)$ is dominated by both $S$ and $T$ is equal to 
$$
\left( 1-2^{-\ell} \right) + \left(1-2^{-(k-\ell)}\right)^2 2^{-\ell} = 1-2^{-k} - 2^{-k} \left(1-2^{-(k-\ell)}\right) \le 1 - \frac 32 \cdot 2^{-k} < 1-2^{-k}.
$$
Hence, 
$$
\sum_{S,T, S \neq T} \prob{A_S \cap A_T} \le n^{2k} \left( 1 - \frac 32 \cdot 2^{-k} \right)^{n-2k} = o \left( \sum_{S} \prob{A_S} \right),
$$
and so the bound~(\ref{lowerBound}) holds.

\bigskip

Surprisingly, this lower bound is in fact the correct asymptotic value for $\prob{\kcw{} }.$ Our main result is the following theorem.
\begin{theorem}\label{maincw}
Let $k \in \N$. In $G(n,1/2),$ we have that
$$
\prob{\mathrm{\kcw{} } } = (1+o(1)) \left( 1-2^{-k} \right)^{-k} {n \choose k} \left( 1-2^{-k} \right)^{n} = \Theta \left(n^k \left( 1-2^{-k} \right)^{n} \right).
$$ 
\end{theorem}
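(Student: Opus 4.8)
The lower bound in Theorem~\ref{maincw} is nothing but the estimate~(\ref{lowerBound}) of $\prob{\kd{}}$ together with the observation (made just before the theorem) that every graph with a dominating set of cardinality $k$ is $k$-cop-win. Everything therefore reduces to the matching upper bound. Writing the $k$-cop-win event as the disjoint union of the $k$-dom event and the event that the graph is $k$-cop-win yet has no dominating set of size $k$, it suffices to prove that the latter has probability $o\!\left(n^{k}(1-2^{-k})^{n}\right) = o(\prob{\kd{}})$; in words, almost every $k$-cop-win graph contains a dominating set of cardinality $k$. Throughout I would use, as emphasized above, that $G(n,1/2)$ is the uniform space on labelled graphs, so that every probability is a count of graphs divided by $2^{\binom{n}{2}}$ and distinct edges are independent fair coins.

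For the upper bound I would lean on the structure theory behind Theorem~\ref{dis}. A cop-win graph is dismantlable, i.e.\ reducible to a single vertex by successively deleting corners. The analogue for $k$ cops (see~\cite{bn}) characterizes $k$-cop-win graphs through a generalized dismantling in which one repeatedly removes a vertex $r$ whose closed neighbourhood is covered by the closed neighbourhoods of at most $k$ other vertices, $N[r]\subseteq\bigcup_{i=1}^{k}N[c_{i}]$ (a generalized $k$-corner), under the additional requirement that the cops retain control throughout. The plan is to use such a dismantling as a blueprint for reconstructing, and hence counting, each graph in the bad event, and then to bound the number of bad labelled graphs by a union bound over the possible blueprints.

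The crux is a sharp counting estimate in $G(n,1/2)$, and the first thing to observe is that a single generalized $k$-corner is far too weak. For fixed distinct $r,c_{1},\dots,c_{k}$ the probability that $N[r]\subseteq\bigcup_{i}N[c_{i}]$ equals $\left(1-2^{-(k+1)}\right)^{n-O(1)}$, since a vertex $w$ violates the covering only when $w\sim r$ and $w\not\sim c_{i}$ for all $i$, an event of probability $2^{-(k+1)}$; summing over the at most $n^{k+1}$ choices of $(r,c_{1},\dots,c_{k})$ gives a bound of order $n^{k+1}\left(1-2^{-(k+1)}\right)^{n}$, which, because $1-2^{-(k+1)}>1-2^{-k}$, vastly exceeds $\prob{\kd{}}$. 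Thus merely possessing one generalized corner is more likely than having a $k$-dominating set, and the argument must exploit the entire dismantling sequence. Concretely, I would reconstruct each bad graph from its sequence of deletions, noting that at each step the removed vertex's neighbourhood is confined to a union of $k$ closed neighbourhoods, and I would amortize the cost of specifying the order of deletions against the adjacency constraints imposed at every step, using large-deviation control on degrees and on the number of vertices poorly covered by a given $k$-set. The goal is to show that the total count of bad graphs is $o\!\left(n^{k}(1-2^{-k})^{n}\right)2^{\binom{n}{2}}$, negligible beside the $\Theta\!\left(n^{k}(1-2^{-k})^{n}\right)2^{\binom{n}{2}}$ graphs that already carry a dominating set of size $k$.

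The main obstacle is precisely this amortization. For $k=1$ a cop-win graph is dismantlable by ordinary corners and the recursion is comparatively clean, whereas for $k\ge 2$ the generalized-corner relation couples $k+1$ vertices simultaneously and, as computed above, no bounded number of such relations suffices; one must instead track the cumulative effect of the whole deletion sequence while keeping the polynomial overhead of encoding its order from overwhelming the exponentially small target. The delicate competitors are the structures that look almost like dominating sets of size $k$ but with a small defect, and the heart of the proof is to show that each of these loses to an honest $k$-dominating set by a factor tending to zero. This is where the precise large-deviation estimates for $G(n,1/2)$ do the decisive work.
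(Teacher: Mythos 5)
Your reduction is fine: the lower bound is exactly~(\ref{lowerBound}) plus the trivial implication \kd{} $\Rightarrow$ \kcw{}, and the whole task is indeed to show that $\prob{\kcw{} \mbox{ and not } \kd{}} = o\bigl(n^k(1-2^{-k})^n\bigr)$. But your route to that upper bound has two genuine gaps. First, the characterization you want to lean on does not exist in the form you state it. For $k\ge 2$ there is no dismantling of the graph itself by ``generalized $k$-corners'' $N[r]\subseteq\bigcup_{i=1}^k N[c_i]$ that characterizes $k$-cop-win graphs; the actual elimination-ordering characterization (Clarke--MacGillivray~\cite{cm}) orders vertices of the $(k+1)$th \emph{strong power} of the graph, i.e.\ it eliminates $k$-tuples of cop positions, and your clause ``under the additional requirement that the cops retain control throughout'' is a placeholder for precisely the combinatorial structure that is missing. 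Second, even granting some such blueprint, the entire analytic content of the proof --- the amortization of the deletion sequence against the adjacency constraints --- is stated as a goal, not carried out. You correctly compute that a single generalized corner only buys $\left(1-2^{-(k+1)}\right)^{n-O(1)}$, which is exponentially larger than the target $n^k(1-2^{-k})^n$, and then say one ``must instead track the cumulative effect of the whole deletion sequence''; no mechanism is proposed for doing so, and this is exactly the step at which this approach has historically stalled (the paper notes that the limited understanding of $k$-cop-win structure for $k>1$ was the stumbling block that left the problem open after~\cite{bkp}).

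For comparison, the paper sidesteps dismantling entirely and argues from the robber's side. It defines $\delta_k$ as the minimum, over all $k$-sets $S$, of the number of vertices not dominated by $S$, and splits the bad event into $1\le\delta_k\le\xi n$ and $\delta_k>\xi n$ (Theorem~\ref{auxcw}). In the first regime it exhibits an explicit greedy robber strategy anchored at a nearly-dominating set $S$ and an undominated vertex $v$, and shows (Lemma~\ref{lem:key}) that if this strategy ever fails then the graph satisfies a conjunction of structural properties whose probability is at most $2^{(\log_2(1-2^{-k})-\eps)n}$; a union bound over the $n^{O(1)}$ choices of parameters finishes that case. In the second regime it shows that unless the graph contains many ``dangerous'' vertices --- an event of probability at most $2^{-2n}$ --- the robber can forever move among safe vertices. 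Both cases yield bounds that beat $n^k(1-2^{-k})^n$ by an exponential factor, which is what your sketch needs but does not supply.
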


We prove Theorem~\ref{maincw} in the next section. Using it, we derive the asymptotic number of labelled $k$-cop-win graphs.
\begin{corollary}
Let $k \in \N$. The number of $k$-cop-win graphs on $n$ labelled vertices is
$$
\prob{\mathrm{\kcw{} } } 2^{\binom{n}{2}}  = (1+o(1)) (1-2^{-k})^{-k} {n \choose k} 2^{n^2/2 - (1/2-\log_2(1-2^{-k})) n}.
$$
\end{corollary}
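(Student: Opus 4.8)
The plan is to obtain the statement directly from Theorem~\ref{maincw} together with the observation, already made in the introduction, that $G(n,1/2)$ is a \emph{uniform} probability space over all labelled graphs on $n$ vertices. Since each labelled graph $G$ on $n$ vertices satisfies $\prob{G \in G(n,1/2)} = (1/2)^{\binom{n}{2}}$, there are exactly $2^{\binom{n}{2}}$ labelled graphs on $n$ vertices, and the number of those that are $k$-cop-win equals
$$
\prob{\mathrm{\kcw{} }} \, 2^{\binom{n}{2}}.
$$
This accounts for the left-hand side of the claimed identity, so it remains only to substitute the asymptotic value of $\prob{\mathrm{\kcw{} }}$ supplied by Theorem~\ref{maincw} and to simplify the resulting exponentials.

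First I would insert the expression from Theorem~\ref{maincw}, obtaining
$$
\prob{\mathrm{\kcw{} }} \, 2^{\binom{n}{2}} = (1+o(1)) \left(1-2^{-k}\right)^{-k} \binom{n}{k} \left(1-2^{-k}\right)^{n} 2^{\binom{n}{2}}.
$$
Then I would rewrite the two $n$-dependent factors $\left(1-2^{-k}\right)^{n}$ and $2^{\binom{n}{2}}$ on the common base $2$. Using $\binom{n}{2} = n^2/2 - n/2$ and $\left(1-2^{-k}\right)^{n} = 2^{n \log_2(1-2^{-k})}$, their product is
$$
2^{\,n^2/2 - n/2 + n \log_2(1-2^{-k})} = 2^{\,n^2/2 - (1/2 - \log_2(1-2^{-k})) n},
$$
which is exactly the exponential factor in the statement. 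Multiplying by the leftover polynomial factor $\left(1-2^{-k}\right)^{-k} \binom{n}{k}$ and keeping the inherited error term yields the asserted formula.

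Every step above is an exact identity apart from the single substitution of Theorem~\ref{maincw}, so there is no genuine obstacle here: the corollary is a routine unfolding of the fact that $G(n,1/2)$ carries the uniform measure on labelled graphs, and the only computation is the elementary regrouping of the exponents on base $2$. In particular, the $(1+o(1))$ term is carried over verbatim from Theorem~\ref{maincw} and requires no additional control.
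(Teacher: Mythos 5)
Your proposal is correct and follows exactly the route the paper intends: since $G(n,1/2)$ is uniform over the $2^{\binom{n}{2}}$ labelled graphs, the count equals $\prob{\mathrm{\kcw{}}}\,2^{\binom{n}{2}}$, and substituting Theorem~\ref{maincw} plus the exponent identity $2^{\binom{n}{2}}(1-2^{-k})^n = 2^{n^2/2-(1/2-\log_2(1-2^{-k}))n}$ gives the stated formula. The paper leaves this computation implicit, and your write-up fills it in faithfully with no gaps.
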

It also follows that almost all $k$-cop-win graphs contain a dominating set of cardinality $k$, a fact not obvious a priori.
\begin{corollary}
Let $k \in \N$. In $G(n,1/2),$ we have that
$$
\prob{\kd{} ~~|~~ \kcw{} } = 1-o(1).
$$
\end{corollary}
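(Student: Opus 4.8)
The plan is to observe that this corollary is an essentially immediate consequence of Theorem~\ref{maincw} together with the lower bound~(\ref{lowerBound}), so that no new probabilistic estimate is required. First I would record the containment of events: as noted right after the definitions, if a graph possesses a dominating set of cardinality $k$ then it is $k$-cop-win. In the language of events this says that \kd{} implies \kcw{}, that is, the event \kd{} is contained in the event \kcw{}, and therefore $\kd{} \cap \kcw{} = \kd{}$. Consequently the conditional probability collapses to a simple ratio,
$$
\prob{\kd{} ~~|~~ \kcw{} } = \frac{\prob{\kd{} \cap \kcw{} }}{\prob{\kcw{} }} = \frac{\prob{\kd{} }}{\prob{\kcw{} }}.
$$

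Next I would substitute the two asymptotic formulas already established. The numerator is handled by~(\ref{lowerBound}), which gives $\prob{\kd{} } = (1+o(1)) (1-2^{-k})^{-k} {n \choose k} (1-2^{-k})^{n}$, while the denominator is handled by Theorem~\ref{maincw}, which delivers exactly the same leading expression for $\prob{\kcw{} }$. Dividing, the common factor $(1-2^{-k})^{-k} {n \choose k} (1-2^{-k})^{n}$ cancels, and only the ratio $(1+o(1))/(1+o(1)) = 1+o(1)$ survives. Since the containment $\kd{} \subseteq \kcw{}$ forces $\prob{\kd{} } \le \prob{\kcw{} }$, this ratio is at most $1$, and hence the $1+o(1)$ is in fact $1-o(1)$, which is the claim.

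Because both ingredients are already in hand, there is no genuine obstacle in this argument; all of the difficulty lives in the proof of Theorem~\ref{maincw}, whose matching upper bound on $\prob{\kcw{} }$ is precisely what makes the numerator and denominator asymptotically equal. The only point demanding any care is the bookkeeping that the two $(1+o(1))$ factors really do multiply the same dominant term, so that their quotient tends to $1$ rather than to some other constant; this is guaranteed by the fact that Theorem~\ref{maincw} was deliberately stated with the identical constant $(1-2^{-k})^{-k}$ appearing in~(\ref{lowerBound}).
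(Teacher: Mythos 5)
Your proof is correct and is exactly the argument the paper intends: the paper states the corollary with "It also follows" precisely because, once $\kd{} \subseteq \kcw{}$ is noted, the conditional probability reduces to the ratio of $\prob{\kd{}}$ from~(\ref{lowerBound}) to $\prob{\kcw{}}$ from Theorem~\ref{maincw}, and these asymptotics coincide. Nothing is missing; your additional remark that the ratio is at most $1$ (so $1+o(1)$ is really $1-o(1)$) is a correct finishing touch.
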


\bigskip

Let us mention that these observations generalize the result obtained in~\cite{bkp}.
\begin{theorem}[\cite{bkp}]\label{thm:old}
In $G(n,1/2),$ we have that
$$
\prob{\mathrm{\cw{} } } = (1+o(1))n2^{-n+1}.
$$
\end{theorem}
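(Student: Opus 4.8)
The plan is to prove matching upper and lower bounds on $\prob{\cw}$, with the universal-vertex graphs providing the dominant contribution. The lower bound is essentially already in hand: specializing the computation preceding Theorem~\ref{maincw} to $k=1$ gives $\prob{\universal} = (1+o(1))\, 2 n 2^{-n} = (1+o(1)) n 2^{-n+1}$, and since a universal vertex $v$ yields a trivial cop strategy (the cop starts on $v$ and captures on the next move), every graph with a universal vertex is cop-win. Hence $\prob{\cw} \ge \prob{\universal} = (1+o(1)) n 2^{-n+1}$, and everything reduces to the matching upper bound.

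For the upper bound I would work through Theorem~\ref{dis}: a cop-win graph is dismantlable, so it admits an elimination ordering $w_1, w_2, \ldots, w_n$ in which each $w_i$ is a corner of $G[\{w_i, \ldots, w_n\}]$, witnessed by a dominator $d_i$ with $N[w_i] \subseteq N[d_i]$ in that induced subgraph. Reading the ordering backwards turns a cop-win graph into a build-up process: when $w_i$ is added to the already-constructed graph on $P_i := \{w_{i+1}, \ldots, w_n\}$, its edges into $P_i$ are not free but are forced to satisfy $N(w_i) \cap P_i \subseteq N[d_i]$. Writing $\deg_{P_i}(d_i)$ for the degree of $d_i$ inside $P_i$, there are then at most $2^{\deg_{P_i}(d_i)}$ admissible neighbourhoods for $w_i$ instead of $2^{|P_i|}$, so, conditioning on the past, the constraint at step $i$ holds with probability $2^{-(|P_i|-\deg_{P_i}(d_i))}$. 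Each elimination step whose dominator is not adjacent to all of $P_i$ thus contributes a genuine saving of $2^{|P_i|-\deg_{P_i}(d_i)}$ relative to a generic graph.

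To exploit this I would split $\prob{\cw} \le \prob{\universal} + \prob{\cw \wedge \neg\,\universal}$, evaluating the first term directly as the claimed main term $(1+o(1)) n 2^{-n+1}$, and bounding the second by a union over elimination certificates, i.e.\ over orderings together with dominator choices. A naive first moment over all certificates would overcount the universal-vertex graphs (which admit many eliminations) and so cannot recover the main term; but for the $\neg\,\universal$ contribution this overcounting is harmless, because I expect to show that contribution is $2^{-\Omega(n^2)}$-small, while the total number of certificate shapes is only $\le n!\, n^{n} = 2^{O(n\log n)}$. Thus it suffices to argue that across the $\Theta(n)$ large stages the per-stage savings $2^{|P_i|-\deg_{P_i}(d_i)}$ accumulate to a factor $2^{-\Omega(n^2)}$ that swamps $n!\, n^{n}$ and renders $\prob{\cw \wedge \neg\,\universal}=o(n2^{-n})$.

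The main obstacle is that this accumulated saving degenerates precisely when the chosen dominators are adjacent to essentially all of $P_i$, and a subtle point is that the global hypothesis $\neg\,\universal$ does \emph{not} prevent $d_i$ from being adjacent to all of $P_i$ (the dominator need only have a non-neighbour among the already-removed vertices $w_1,\ldots,w_{i-1}$). Worse, large dominator degree is positively correlated with cop-winness, so one cannot simply substitute the unconditioned typical value $\deg_{P_i}(d_i)\approx |P_i|/2$. I would resolve this by a maximum-degree split: graphs possessing a vertex adjacent to all but $o(n)$ others are counted by a direct first-moment estimate that matches $\prob{\universal}$, whereas for the remaining graphs every candidate dominator has degree bounded away from $|P_i|$ for all large $P_i$, forcing a per-stage saving of $2^{\Omega(|P_i|)}$ and hence the desired $2^{-\Omega(n^2)}$ total. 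Carrying out this degree split rigorously, and in particular controlling the correlation between dominator degree and cop-winness during the build-up, is the technical heart of the argument.
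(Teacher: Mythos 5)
Your lower bound is fine, and your treatment of the ``far'' case is essentially workable: if no vertex of $G$ has nearly full degree, then for the first $\Omega(n)$ elimination stages every admissible dominator satisfies $|P_i|-\deg_{P_i}(d_i)=\Omega(n)$ (a dominator with nearly full degree in $P_i$ at an early stage would be near-universal in $G$), and a careful reverse-order edge exposure turns the per-stage savings into a bound of $2^{-\Omega(n^2)}$, which indeed swamps the $n!\,n^n$ certificates. The genuine gap is how you dispose of the other side of your split. You claim that graphs possessing a vertex adjacent to all but $o(n)$ others ``are counted by a direct first-moment estimate that matches $\prob{\universal{}}$.'' This is false. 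Already the event that some vertex has exactly one non-neighbour has probability $(1+o(1))\,n^2 2^{-n+1}$, a factor of order $n$ \emph{larger} than $\prob{\universal{}}$; allowing up to $\eps n$ non-neighbours gives probability of order $n{n \choose \eps n}2^{-n}$, which is exponentially larger than $n2^{-n}$. So no first moment over near-universal graphs can produce the required $o(n2^{-n})$: in this regime you are forced to use cop-winness \emph{jointly} with the absence of a universal vertex. But this is exactly where your certificate argument yields nothing, because (as you yourself observe) the near-universal vertex is an admissible dominator at essentially every stage with deficit $|P_i|-\deg_{P_i}(d_i)$ as small as $O(1)$, so the union bound over the $2^{O(n\log n)}$ certificates lands far above the target.

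What is left unproven is precisely the statement that, in the notation of the paper,
$\prob{\cw{} \mbox{ and } 1 \le \delta_1 \le \eps n} = o(n2^{-n})$,
i.e.\ that a graph in which some vertex dominates all but a few vertices, yet no vertex dominates all of them, is extremely unlikely to be cop-win. This is not a bookkeeping matter; it is the technical heart of the theorem. The original proof in~\cite{bkp} spends most of its effort on exactly this case (within the dismantlability framework you adopt, via a much more delicate analysis of the ordering than per-stage degree savings), and the present paper handles it --- for general $k$ --- as Theorem~\ref{auxcw}(a) via Lemma~\ref{lem:key}: one fixes the near-dominating set $S$ and an undominated vertex $v$, designs an explicit robber strategy anchored at $v$, $N(v)$ and $N^c(v)$, and shows that the structural configurations under which this strategy can fail occur with probability at most $2^{(\log_2(1-2^{-k})-\eps)n}$. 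Notably, the paper does this while avoiding the elimination-ordering characterization of Theorem~\ref{dis} altogether, precisely because corner/dominator certificates carry no probabilistic savings in the near-universal regime. To complete your proof you would need an argument of this kind (or the full analysis of~\cite{bkp}) for that case; the maximum-degree split alone cannot close it.
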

In~\cite{bkp}, it was conjectured that the theorem can be generalized, but this was left as an open problem. The proof of Theorem~\ref{thm:old} used the characterization of cop-win graphs (see Theorem~\ref{dis}). The limited current understanding of graphs with cop number two or higher was the main stumbling block in extending the result. For example, there are no elementary analogues of cop-win orderings for higher $k.$ An elimination ordering characterization of $k$-cop-win graphs for $k>1$ was given in~\cite{cm}, although it becomes  more complex as $k$ increases (in particular, a vertex ordering is provided but in the $(k+1)$th strong power of the graph). The proof of Theorem~\ref{maincw} avoids these complications and analyzes the winning strategy for the robber instead. Therefore, the proof techniques presented in this paper are quite different and might be of interest by themselves. 

\section{Proofs of main results}

We start this section with some notation that will be used in the proof. The \emph{neighbourhood} of a vertex $v$ is the set containing all neighbours of $v$ and is denoted by $N(v)$. We use $N^c(v) = V(G) \setminus (N(v) \cup \{v\})$ for the set of non-neighbours of $v$, and for every set $S \subseteq V(G)$, let $N^c(S) = \bigcap_{u \in S} N^c (u)$. For a given $k \in \N$, let 
$$
\delta_k = \min_{S \subseteq V, |S|=k} \left| N^c (S) \right|. 
$$
Note that $\delta_k = 0$ if and only if there exists a dominating set of cardinality $k$. 

\smallskip

To prove Theorem~\ref{maincw} we bound the probability of $\kcw{} $ for graphs with $\delta_k \ge 1$.  Since the proof for small values of $\delta_k$ has a different flavour than the one for large ones, we prove them independently.
\begin{theorem}\label{auxcw}
Let $k \in \N$. There exist $\xi > 0$ and $\eps > 0$ such that the following hold. In $G(n,1/2),$ we have that
\begin{enumerate}
\item[(a)]
$ \prob{\kcw{} \mbox{ and } 1 \le \delta_k \le \xi n } \le 2^{(\log_2(1-2^{-k})-\eps)n}$, and
\item[(b)]
$ \prob{\kcw{} \mbox{ and } \delta_k > \xi n } \le 2^{(\log_2(1-2^{-k})-\eps)n}$.
\end{enumerate}
\end{theorem}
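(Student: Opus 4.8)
The plan is to fix a strategy for the robber and prove that it defeats $k$ cops on every graph that avoids a certain rare ``cornering'' structure; then the event $\kcw{}$, in the relevant range of $\delta_k$, forces that structure, and a first moment estimate bounds its probability by $2^{(\log_2(1-2^{-k})-\eps)n}$. Call a vertex $r$ \emph{safe} with respect to a set $S'$ of at most $k$ cops if $r \in N^c(S')$, i.e.\ no cop sits on $r$ or is adjacent to it. The robber's baseline strategy is to stay on safe vertices. Because $\delta_k \ge 1$ we have $|N^c(S_0)| \ge 1$ for the initial placement $S_0$ (as $|S_0| \le k$), so he can start safely; and if he stands on a safe $r$ and the cops move to $S'$, he is not captured and only needs a safe vertex in $N[r]$. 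Hence $\kcw{}$ forces a \emph{local trap}: a moment at which the robber sits on some $r$ while the cops occupy $S'$ with $N[r] \subseteq \bigcup_{u \in S'} N[u]$.

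The difficulty, and the reason the theorem is not immediate, is that a single local trap is far too common: a routine first moment gives $\prob{\exists\, r, S' : N[r] \subseteq \bigcup_{u \in S'} N[u]} \approx n^{k+1}(1-2^{-(k+1)})^{n}$, and since $1-2^{-(k+1)} > 1-2^{-k}$ this already exceeds the target bound. The robber must therefore be given foresight. I would have him refuse to step onto any vertex from which he could be trapped on the following move; that is, he stays on safe vertices that are not \emph{exposed}, where $r$ is exposed if some $S''$ with $|S''| \le k$ satisfies $N[r] \subseteq \bigcup_{u \in S''} N[u]$. This strategy succeeds as long as, from every safe non-exposed $r$ and every cop move $S'$, the set $N[r] \cap N^c(S')$ still contains a non-exposed vertex. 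Iterating this idea (looking several moves ahead, or maintaining a whole reservoir of safe escape vertices rather than a single one) yields the structural event whose absence guarantees a robber win, and which $\kcw{}$ must therefore produce.

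The split on $\delta_k$ reflects how large the safe set $N^c(S')$ is. In case (b), $\delta_k > \xi n$ forces $|N^c(S')| > \xi n$ for every cop set, so a robber neighbourhood typically meets the safe set in linearly many vertices; cornering then requires either that $N(r)$ misses almost all of a linear safe set (a large deviation, controlled by Chernoff bounds) or that linearly many safe vertices are simultaneously degenerate (controlled by a first moment that must exploit the joint rather than the per vertex probability). In case (a), $1 \le \delta_k \le \xi n$, the safe set can be as small as a single vertex, so a per configuration bound is hopeless; instead I would union over the near dominating witness --- a set $S^*$ of size $k$ together with its undominated set $U = N^c(S^*)$, $1 \le |U| \le \xi n$. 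The witness alone already carries the factor $(1-2^{-k})^{n}$ (it costs that much for $S^*$ to dominate $V \setminus U$), and it is in fact too likely on its own; the gain comes from showing that, conditioned on the witness, the cops can still corner a robber exploiting the nonempty $U$ only with an extra $2^{-\eps n}$ factor, after which summing over the $\binom{n}{k}$ choices of $S^*$ and the $\binom{n}{\le \xi n}$ choices of $U$ (with $\xi$ small) preserves the bound.

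The main obstacle is precisely this first moment estimate. Because one local trap, and indeed one exposed vertex, already occurs with probability far above the target, the bound cannot be obtained one configuration at a time: one must show that defeating the foresighted robber forces many correlated local obstructions at once --- essentially that an entire safe neighbourhood is degenerate --- and estimate the probability of that joint event while handling the dependence between the ``$r$ has few safe neighbours'' and ``the few it has are all degenerate'' alternatives. Driving the exponent down from the $\log_2(1-2^{-(k+1)})$ of a single trap all the way to $\log_2(1-2^{-k}) - \eps$ is where the real work lies, and it is also why the two ranges of $\delta_k$, with their very different amounts of available safe space, are handled separately.
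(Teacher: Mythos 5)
Your plan reproduces the paper's skeleton --- the same split on $\delta_k$, the union over a near-dominating witness carrying the factor $(1-2^{-k})^n$ in case (a), and a large-safe-set argument in case (b) --- and you correctly diagnose why a per-configuration first moment fails (a single trap costs $(1-2^{-(k+1)})^n$, far above the target). But the proposal stops exactly where the proof has to begin, as you yourself concede (``where the real work lies''). Three things are missing, and none is routine. First, the robber's strategy is never actually defined: ``avoid exposed vertices'' cannot be the strategy, since, as you note, even one exposed vertex is too likely, and your fix (``looking several moves ahead, or maintaining a reservoir'') is left as a gesture. The paper resolves this by anchoring the strategy to the witness itself: the robber plays relative to the fixed near-dominating set $S$ and an undominated vertex $v$, moving to $v$ when the cops occupy all of $S$, to a non-dominated vertex of $N(v)$ avoiding all cops when they occupy a proper subset $T \subset S$ and miss $v$, and to such a vertex of $N^c(v)\setminus S$ when they cover $v$. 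Second, the structural event forced by a cop win must be stated in a form whose probability can be computed; in the paper this is the conjunction (p1)--(p5) of Lemma~\ref{lem:key}, where the factor $(1-2^{-k})^n$ comes from ``$S$ dominates all but $zn \le \xi n$ vertices'' and the escape-failure property (p5) is what the strategy's failure produces. Third, the extra $2^{-\eps n}$ has to be extracted by a case analysis whose dependencies are controlled by the order in which edges are exposed: if $|N(v)|/n$ is bounded away from $1/2$ one wins an entropy factor; otherwise, if few neighbours of $v$ are dominated by $S$ but avoid $T$, one wins by a Chernoff bound; otherwise one exposes the edges inside $W = V \setminus (S \cup \{v\})$ only after those from $S \cup \{v\}$, so that failure of the escape property over linearly many independent candidate vertices costs $e^{-\Omega(n)}$ --- and throughout, $\xi$ must be taken small enough that the entropy cost ${n \choose \xi n}$ of the undominated set does not eat the gain. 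None of this appears in your write-up, and without it there is no bound to sum.

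In case (b) you are closer in spirit, but you miss the counting device that makes the estimate work. The paper calls $b$ \emph{dangerous} if some $k$-set $A$ has $|N^c(A) \cap N(b)| \le 2q$, with $q = 3(k+1)/\xi$ a constant; if fewer than $q$ vertices are dangerous, the robber wins outright by staying on safe non-dangerous vertices, so a cop win forces at least $q$ dangerous vertices. The quantitative point is that a single dangerous vertex costs only about $2^{-\xi n}$, which is \emph{not} below the target $2^{(\log_2(1-2^{-k})-\eps)n}$ when $\xi$ is small; one therefore needs the joint event, and constantly many (namely $3/\xi$) pairwise distinct dangerous vertices already suffice, because their exposures can be taken disjoint and the product $\left((1/2)^{2\xi n/3}\right)^{3/\xi} = 2^{-2n}$ beats everything. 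Your phrase ``linearly many safe vertices simultaneously degenerate'' points at an event that is neither needed nor what gets estimated; the argument you would have to supply is the one above.
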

\noindent Theorem~\ref{maincw} follows immediately from Theorem~\ref{auxcw} and~(\ref{lowerBound}).

\bigskip

Let $G$ be a random graph drawn from the $G(n,1/2)$ distribution. Our goal is to investigate the probability that $\delta_k \ge 1$ and that $c(G) \le k$. We show that this event holds \emph{with extremely small probability} (\emph{wesp}), which means that the probability it holds is at most $2^{(\log_2(1-2^{-k})-\eps)n}$ for some $\eps > 0$. Observe that if we can show that each of a polynomial number of events holds \emph{wesp}, then the same is true for the union of these events. 

\subsection{Proof of Theorem~\ref{auxcw}(a)} 

Let $S \subseteq V(G)$ be a set of vertices that dominates all but $\delta_k \ge 1$ vertices; let $v \in N^c (S)$ be some vertex \emph{not} dominated by $S$. To estimate the probability (from above) that $c(G)\le k$ we introduce a strategy for the robber and show that \emph{wesp} $k$ cops can win (against this given strategy). In order to motivate the desired structure of a graph $G$, we start with the outline of the strategy for the robber. However, at this point, it is not clear that this strategy can be successfully applied; this will be shown next. The outline is as follows:
\begin{enumerate}
\item [(a)] if cops move such that they are located in the whole set $S$, then the robber goes to $v$ ;
\item [(b)] if cops move to a proper subset $T$ of $S$ and no cop occupies $v$, then the robber goes to a vertex of $N^c(T) \cap N(v)$ that is adjacent to no cop;
\item [(b')] in particular, if no cop is located in $S \cup \{v\}$ (corresponding to $T = \emptyset$ above), then the robber goes to a vertex of $N(v)$ that is adjacent to no cop;
\item [(c)] if cops move to a proper subset $T$ of $S$ and some cop occupies $v$, then the robber goes to a vertex of $(N^c(T) \cap N^c(v)) \setminus S$ that is adjacent to no cop;
\item [(c')] in particular, if no cop is located in $S$ but some cop occupies $v$ (corresponding to $T = \emptyset$ above), then the robber goes to a vertex of $N^c(v) \setminus S$ that is adjacent to no cop.
\end{enumerate}
This outlines the strategy for the robber when the game is already on. The same applies to the beginning of the game; for example, if cops start the game such that they are located in the whole set $S$, then the robber starts by occupying $v$, etc. If at some point the game, the robber has more than one vertex to choose from, then she can make an arbitrary choice. This strategy is a greedy one that guarantees that the robber stays alive for at least one more round but does not ``think about the future''. If all the cops ``pause'', then we may assume that the robber ``pauses'' too. Hence, we may assume that in each round at least one cop moves.

\bigskip

The following technical lemma is the key observation. In order to simplify the notation, we consider the probability space $G(n+k+1,1/2)$ instead of $G(n,1/2)$.

\begin{lemma}\label{lem:key}
Let $S \subseteq V = \{ 1, 2, \ldots, n+k+1\}$ with $|S|=k$, and let $v \notin S$. Let $T \subset S$ with $0 \le |T| = \ell < k$. Let $x=x(n)$, $y=y(n)$, $z=z(n)$ be any deterministic functions such that $xn, yn, zn \in \Z$, $0 \le y, z \le 1$, and $0 \le x \le y$. Consider the following properties:
\begin{enumerate}
\item [(p1)] $v \in N^c(S)$,
\item [(p2)] $yn$ vertices of $W := V \setminus (S \cup \{v\})$ are adjacent to $v$,
\item [(p3)] $S$ dominates all but $zn$ vertices of $W$,
\item [(p4)] $xn$ vertices of $N(v)$ that are dominated by $S$ are not adjacent to any vertex of $T$,
\item [(p5)] the following is \emph{not} true: for every $U \subseteq W$ with $|U| = k-\ell$ and every $w \in W \setminus U$, there exists a vertex $w' \in N(v)$ adjacent to $w$ but not adjacent to any vertex of $T \cup U$.
\end{enumerate}

Let $G=(V,E) \in G(n+k+1,1/2)$. (Note that all the variables are chosen in advance before a random graph $G$ is generated.) Then, there exists $\xi \in (0,1/2]$ such that if $z \le \xi$, then properties (p1)-(p5) hold simultaneously \emph{wesp}. Moreover, the statement holds when in both (p4) and (p5), $N(v)$ is replaced by $N^c(v)$. 
\end{lemma}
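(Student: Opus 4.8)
The plan is to exploit the mutual independence of four groups of edges and to expose them in a convenient order. Writing $W = V \setminus (S \cup \{v\})$, so that $|W| = n$, the edges between $v$ and $S$ govern (p1), the edges between $v$ and $W$ govern (p2), the edges between $W$ and $S$ govern (p3) and (p4), and the edges \emph{inside} $W$ govern the remaining freedom in (p5). The crucial point is that (p1)--(p4) depend only on edges incident to $\{v\} \cup S$, hence are independent of the edges inside $W$; I would expose the graph in exactly that order and write $\prob{\text{(p1)--(p5)}} = \prob{\text{(p1)}}\,\prob{\text{(p2)}}\,\prob{\text{(p3)} \wedge \text{(p4)} \mid \text{(p2)}}\,\prob{\text{(p5)} \mid \text{(p1)--(p4)}}$.

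The first factors are routine. One has $\prob{\text{(p1)}} = 2^{-k}$ and $\prob{\text{(p2)}} = \binom{n}{yn} 2^{-n}$. For the joint event (p3)$\wedge$(p4), condition on (p2), fixing $A := N(v) \cap W$ with $|A| = yn$. Each $w \in W$ independently receives an edge-pattern to the partition $\{v\}, T, S \setminus T$, and I would classify $w$ by the three indicators ``adjacent to $v$'', ``not adjacent to $T$'' (probability $2^{-\ell}$), and ``adjacent to some vertex of $S \setminus T$'' (probability $1-2^{-(k-\ell)}$). Then (p3) fixes the number of \emph{undominated} vertices (not adjacent to $T$ and not to $S \setminus T$) at $zn$, while (p4) fixes the number of vertices lying in $A$, avoiding $T$, and adjacent to $S \setminus T$, at $xn$. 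By the method of types this probability is $2^{(g(x,y,z)+o(1))n}$ for an explicit rate $g$ obtained by minimizing Kullback--Leibler divergence from the product measure subject to these three linear constraints.

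For (p5), let $A' \subseteq A$ be the neighbours of $v$ that avoid $T$; then (p4) gives $|A'| \ge xn$, and after exposing (p1)--(p4) the only fresh randomness left is the edges inside $W$. Fix a cop placement $T \cup U$ with $U \subseteq W$, $|U| = k-\ell$, and a robber position $w \in W \setminus U$. A candidate escape vertex $w' \in A' \setminus (U \cup \{w\})$ works precisely when $w' \sim w$ and $w' \not\sim U$, an event of probability $2^{-(k-\ell+1)}$ that depends only on the inside-$W$ edges and is independent across distinct $w'$ (the relevant edge sets are disjoint). Hence the robber fails for a fixed pair $(U,w)$ with probability at most $(1-2^{-(k-\ell+1)})^{|A'| - k - 1} \le (1-2^{-(k-\ell+1)})^{xn - k - 1}$, and a union bound over the $O(n^{k-\ell+1})$ pairs $(U,w)$ yields $\prob{\text{(p5)} \mid \text{(p1)--(p4)}} \le n^{k-\ell+1}(1-2^{-(k-\ell+1)})^{xn - k - 1}$. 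Multiplying the four factors, it remains to verify, for all feasible $(x,y)$ and all $0 \le z \le \xi$, the single inequality
\[
g(x,y,z) + x \log_2\!\left(1-2^{-(k-\ell+1)}\right) \le \log_2\!\left(1-2^{-k}\right) - 2\eps .
\]

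I expect this optimization to be the main obstacle. Property (p3) alone already sits at or \emph{above} $\log_2(1-2^{-k})$ once $z$ is small, so the saving must come from the negative term $x\log_2(1-2^{-(k-\ell+1)})$ contributed by (p5); on the other hand, the adversary would like $x$ small to kill that saving. What rescues the argument is feasibility: since the total number of undominated vertices is $zn$, the number of vertices that are adjacent to $v$ and avoid $T$ exceeds $xn$ by at most $zn$, so driving $x$ small forces this count far below its mean $\tfrac12 2^{-\ell}$, which is exponentially penalized inside $g$. The heart of the proof is to check that this penalty, together with the $x$-term, dominates the small-$z$ excess of (p3) \emph{uniformly} over $\ell \in \{0,1,\dots,k-1\}$, and to extract a single pair $\xi,\eps > 0$ that works for all of them; taking the minimum over the finitely many values of $\ell$ then suffices. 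Finally, the ``moreover'' statement is immediate: replacing ``adjacent to $v$'' by ``non-adjacent to $v$'' throughout replaces one event of probability $\tfrac12$ by its complement of equal probability, leaving $g$, the (p5) bound, and the entire verification unchanged.
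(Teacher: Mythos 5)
Your overall architecture is the same as the paper's: expose the edges incident to $S \cup \{v\}$ first and the edges inside $W$ last, and handle (p5) by counting candidate escape vertices. Indeed, your (p5) step is complete and correct, and coincides with the paper's: at least $xn - k - 1$ candidates $w' \in A'$, each succeeding with probability $2^{-(k-\ell+1)}$ independently (disjoint inside-$W$ edge sets), then a union bound over the $n^{O(1)}$ pairs $(U,w)$. The genuine gap is everything after that. You reduce the lemma to the inequality
$$
g(x,y,z) + x\log_2\left(1-2^{-(k-\ell+1)}\right) \le \log_2\left(1-2^{-k}\right) - 2\eps ,
$$
where $g$ is a rate function obtained ``by minimizing Kullback--Leibler divergence'' --- but you never write $g$ down, never verify the inequality, and never produce the pair $(\xi,\eps)$ whose existence is the entire content of the lemma. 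You say yourself that this optimization is ``the main obstacle'' and ``the heart of the proof.'' In a statement whose whole substance is an exponential-rate comparison, that verification \emph{is} the proof; the heuristic that ``feasibility forces a penalty when $x$ is small'' points in the right direction but is not an argument, and uniformity over $\ell$ and over all feasible $(x,y,z)$ is exactly where such arguments can silently fail.

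The paper closes this gap without ever formulating the variational problem, via a three-way case split, and your raw material can be assembled the same way. (i) If $y$ is bounded away from $1/2$, the factor $\binom{n}{yn}2^{-n}$ alone costs $e^{-\eps_2 n}$, which beats the $\binom{n}{\xi n}$ overhead from (p3) once $\xi$ is small. (ii) If $y \approx 1/2$ and $x \le \eta/6$, where $\eta = \frac{2^{-\ell}(1-2^{-(k-\ell)})}{1-2^{-k}}$, then (p4) itself is a conditional lower-tail deviation: given (p2) and (p3), the number of neighbours of $v$ dominated by $S$ and avoiding $T$ stochastically dominates a binomial with mean at least $\eta n/3$, so
$$
\prob{ \Bin(n/3,\eta) \le \eta n/6 } \le \exp\left(-\frac{\eta n}{36}\right)
$$
by Chernoff --- this is the rigorous version of your feasibility remark, and it makes computing $g$ unnecessary. (iii) If $x \ge \eta/6$, your own (p5) bound gives $\exp(-\eps_3 n)$ with $\eps_3 = \eps_3(k,\ell)$ independent of $\xi$. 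In each case one then chooses $\xi$ so small that the entropy term $f(\xi) = -\xi\ln\xi - (1-\xi)\ln(1-\xi)$ is at most half the relevant saving, and finally takes the minimum over the finitely many values of $\ell$ (and the two symmetric versions, $N(v)$ and $N^c(v)$, which you correctly dispose of by symmetry). Until you either carry out this case analysis or genuinely solve your optimization problem with an explicit $g$, the lemma is not proved.
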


Note the unusual statement of property (p5). This could be avoided but, since we are going to use the negation of (p5) later on, this is a convenient way of thinking about this property. Moreover, note that property (p1) holds with probability $(1/2)^k = \Theta(1)$ and is independent of the remaining properties. Hence, property (p1) could be clearly omitted, since it cannot help with showing that properties (p1)-(p5) hold simultaneously \emph{wesp}. Again, we keep it for convenience, as these are all the properties used later on.

\bigskip

Before we prove the lemma, let us show how the lemma implies Theorem~\ref{auxcw}(a). Let $G \in G(n+k+1,1/2)$. We want to estimate $\prob{\kcw{} \mbox{ and } \delta_k }$ for some $\delta_k \ge 1$. Let $S \subseteq V(G)$ be a set of vertices that dominates all but $\delta_k$ vertices, and let $v \in  N^c (S)$ be a vertex \emph{not} dominated by $S$. 

The robber will try to follow the strategy we outlined above. The goal is to show that if this strategy fails (that is, $k$ cops have a winning strategy), then $G$ satisfies properties (p1)-(p5) in Lemma~\ref{lem:key} for some specific choice of $S, v, T, x(n), y(n)$, and $z(n) = 1+\delta_k$. Since the number of possible choices for $S, v, T, x(n), y(n), z(n)$ to consider is at most $O(n^{k+1+3}) = n^{O(1)}$ (recall that $k=O(1)$) and for each fixed choice we get a statement that holds \emph{wesp}, we get the desired upper bound for $\prob{\kcw{} \mbox{ and } 1 \le \delta_k \le \xi n }$, where $\xi >0$ is a constant implied and guaranteed by Lemma~\ref{lem:key}.

\smallskip

Suppose that the cops start the game by going to the whole set $S$. The robber starts at $v$ and the desired position is achieved (see rule (a)). Suppose then that the cops start the game by going to set $T$, a proper subset of $S$ (possibly the empty set), and (perhaps) some vertices of $U \subseteq W = V(G) \setminus (S \cup \{v\})$. Let $w \in W \setminus U$ be any vertex. The robber can start at $w' \in N(v)$ that is not adjacent to any cop and the desired position is achieved (see rule (b)) unless property (p5) in Lemma~\ref{lem:key} holds and so (p1)-(p5) hold for some specific choice of parameters. (Note that $w'$ is adjacent to $w$ but this is not important at this point; $w$ was used to show the existence of $w'$ only.) Finally, suppose that the cops start the game by going to $T \subset U$ (perhaps $T = \emptyset$), to some $U \subseteq W$, and at least one cop starts at $v$. Again, we take an arbitrary vertex $w \in W \setminus U$ and use it to show that there exists a vertex $w' \in N^c(v)$ that is adjacent to no cop, the desired position in rule (c), unless (p1)-(p5) hold when in both (p4) and (p5), $N(v)$ is replaced by $N^c(v)$. 

\smallskip

Now, suppose that the robber is in her desired position and it is cops turn to move. We will show that regardless what they do, she will be able to move to another desired position. If cops move so that they occupy the whole $S$, the robber wants to move to $v$ (see rule (a) of the robber's strategy). Since there is no edge between $v$ and $S$, no cop occupied $v$ in the previous round. Hence, the robber must be currently in $N(v)$ (see rule (b)) and can easily move to $v$.

Suppose then that cops move to a proper subset $T$ of $S$ but not to $v$ (that is, some cops perhaps go to $U \subseteq W$). The robber is at $v$ (if cops were located in the whole $S$ in the previous round) or some vertex of $W$, and she wants to move to a vertex of $N^c(T) \cap N(v)$ that is adjacent to no cop (see rule (b)). If for every $T \subset S$, $U \subseteq W$ such that $|T \cup U|=k$, and $w \in W \setminus U$, there exists a vertex $w' \in N(v)$ adjacent to $w$ but not adjacent to any vertex of $T \cup U$, the robber can move from $w$ to $w'$ and survive for at least one more round, reaching another desired position. In other words, the robber's strategy fails for at this point if $G$ satisfies properties (p1)-(p5) in Lemma~\ref{lem:key} for some specific choice of $S, v, T, x(n), y(n), z(n)$. A symmetric argument can be used to analyze the case when cops move to $T \subset S$ and to $v$ (see rule (c)) to get a conclusion that if the robber's strategy fails because of that, then $G$ must satisfy properties (p1)-(p5), where $N(v)$ is replaced by $N^c(v)$. 

\bigskip

It remains to prove Lemma~\ref{lem:key}.

\begin{proof}[Proof of Lemma~\ref{lem:key}]
Since edges of a random graph are generated independently, the probability that properties (p2) and (p3) hold simultaneously is equal to
\begin{equation}\label{eq:bound_strong}
{n \choose yn} 2^{-n} {n \choose zn} \left(2^{-k}\right)^{zn} \left( 1 - 2^{-k} \right)^{(1-z)n} \le {n \choose yn} 2^{-n} {n \choose \xi n} \left( 1 - 2^{-k} \right)^{n},
\end{equation}
since $0 < \xi \le 1/2$. Using Stirling's formula ($n! = (1+o(1)) \sqrt{2 \pi n} (n/e)^n$) and taking the exponential part we obtain an upper bound of
$$
\exp \Big( (-y \ln y -(1-y) \ln (1-y) - \xi \ln \xi - (1-\xi) \ln (1-\xi) - \ln 2)n \Big) \left( 1 - 2^{-k} \right)^{n}.
$$
It is straightforward to see that $f(t) := - t \ln t - (1-t) \ln (1-t)$ tends to zero as $t \to 0$, and that $f(t)$ is maximized at $t=1/2$ giving $f(1/2) = \ln 2$. Moreover, if $y \le 1/2 - \eps_1$ or $y \ge 1/2+ \eps_1$ for some $\eps_1 > 0$, we get that $f(y) \le \ln 2 - \eps_2$ for some $\eps_2 = \eps_2(\eps_1) > 0$, and so properties (p2) and (p2) hold \emph{wesp} after taking $\xi$ sufficiently small so that, for example, $f(\xi) \le \eps_2 / 2$ (and so it is also the case that \emph{wesp} (p1)-(p5) hold). Indeed, for such choice of $y$ and $\xi$ we get a bound of
$$
\exp \Big( (\ln 2 - \eps_2 + \eps_2/2 - \ln 2)n \Big) \left( 1 - 2^{-k} \right)^{n} = 2^{(\log_2(1-2^{-k})-\eps_2/2)n}.
$$

Hence, we may assume that $1/2-\eps_1 \le y \le 1/2 + \eps_1$ for some $\eps_1 > 0$. (The constant $\eps_1$ can be made arbitrarily small by assuming that $\xi$ is small enough.) For this range of $y$, we lose negligible term by using slightly weaker bound (see~(\ref{eq:bound_strong})). The probability that properties (p2) and (p3) hold simultaneously is at most
$$
{n \choose yn} 2^{-n} {n \choose zn} \left(2^{-k}\right)^{zn} \left( 1 - 2^{-k} \right)^{(1-z)n} \le {n \choose \xi n} \left( 1 - 2^{-k} \right)^{n}.
$$
We need to consider (on top of that) property (p4) to obtain the desired bound for the event that (p1)-(p5) hold simultaneously. We first expose edges from $v$ to the vertices of $W$. For a vertex $u \in N(v)$, let $A(u)$ be the event that $u$ is dominated by $S$ and let $B(u)$ be the event that $u$ is nonadjacent to $T$. We can perform a ``double exposure'' for each vertex. First, we determine whether $u$ is dominated by $S$; $\prob{ A(u) } = (1-2^{-k})$. If this is the case, then we determine whether $u$ is also nonadjacent to $T$. This time,
$$
\eta := \prob{ B(u) | A(u) } = \frac {\prob{ B(u) \wedge A(u)}} {\prob{ A(u) }} = \frac {2^{-\ell}(1-2^{-(k-\ell)})}{1-2^{-k}}.
$$
(Note that, as expected, $\eta = 1$ if $T = \emptyset$; in any case, $\eta > 0$, since $T$ is a proper subgraph of $S$.) Since the number of vertices not dominated by $S$ is $zn \le \xi n$ and $|N(v)| = yn \ge (1/2-\eps_1)n$, the expected number of neighbours of $v$ that are dominated by $S$ but not adjacent to $T$ is at least
$$
\left( \frac 12 - \eps_1 - \xi \right) n \cdot \eta > \frac {\eta n}{3},
$$
provided $\xi$ (and so $\eps_1$ as well) are small enough. The events associated with two distinct vertices $u_1$ and $u_2$ are independent. We next use a consequence of Chernoff's bound (see e.g.~\cite[p.\ 27, Corollary~2.3]{JLR}), that for a random variable $X$ with the binomial distribution $\Bin(n,p)$ we have
\begin{equation}\label{chern}
\prob{ |X-\E [X]| \ge \eps \E [X]) } \le 2\exp \left( - \frac {\eps^2 \E [X]}{3} \right)  
\end{equation}
for  $0 < \eps < 3/2$. We get that the probability that (p4) holds for $x \le \eta / 6$ (conditioned on (p2) and (p3) holding) is at most
$$
\prob{ \Bin(n/3,\eta) \le \eta n/6 } \le \exp \left( - \frac {(1/2)^2 (\eta n / 3)}{3} \right) = \exp \left( - \frac {\eta}{36} n \right).
$$
Hence, if $x \le \eta/6$, then properties (p2)-(p4) hold simultaneously \emph{wesp}, after taking $\xi$ sufficiently small so that, for example, $f(\xi) \le \eta / 40$. (Again, this trivially implies that \emph{wesp} (p1)-(p5) hold.)

We now may assume, in addition to assuming that $1/2-\eps_1 \le y \le 1/2 + \eps_1$ for some $\eps_1 > 0$, that $x \ge \eta/6$. (As before, the constant $\eps_1$ can be made arbitrarily small by assuming that $\xi$ is small enough but recall that $\eta$ is not a function of $\xi$ and depends only on $k$ and $\ell$.) As before, we note that the probability that properties (p2) and (p3) hold is at most ${n \choose \xi n} \left( 1 - 2^{-k} \right)^{n}$, but this time we need to consider property (p5) to obtain the desired bound. In order to do it, we first expose edges from $S \cup \{v\}$ to $W$ (to estimate the probability that (p2) and (p3) hold) but do not yet expose any edge between vertices of $W$. Hence, we may estimate the probability that (p5) holds by exposing the edges of the subgraph induced by $W$, and all events are independent. The number of choices of $U$ and $w$ is $n^{O(1)}$. For a particular choice of $U$ and $w$, the probability that no suitable $v'$ can be found can be estimated by
$$
\left( 1 - 2^{-1} 2^{-(k-\ell)} \right)^{xn-(k-\ell)-1} \le \left( 1 - 2^{-(k-\ell)-1} \right)^{\eta n /7} = \exp \left( - \eps_3 n \right),
$$
where $\eps_3 = \eps_3(k, \ell) > 0$ and does not depend on $\xi$.
Indeed, with probability $2^{-1}$ a given candidate vertex $v'$ (neighbour of $v$, dominated by $S$ but not adjacent to $T$) is adjacent to $w$, and with probability $2^{-(k - \ell)}$ it is not adjacent to any vertex of $U$. The bound holds, since we have at least $xn-(k-\ell)-1 \ge \eta n /7$ candidates to test and the corresponding events are independent.  The properties (p2), (p3), and (p5) hold \emph{wesp} after taking $\xi$ sufficiently small so that, for example, $f(\xi) \le \eps_3 / 2$. Hence, \emph{wesp} (p1)-(p5) hold and the proof of the lemma is finished.
\end{proof}

\subsection{Proof of Theorem~\ref{auxcw}(b)}

Let $\xi >0$ be the constant yielded by Lemma~\ref{lem:key}. Let $G \in G(n,1/2)$. We want to estimate $\prob{\kcw{} \mbox{ and } \delta_k }$ for some $\delta_k \ge \xi n$. A vertex $b$ of $G$ is called \emph{dangerous} if there exists a \emph{threatening} set of $k$ vertices  $A = \{a_1, a_2, \ldots, a_k\}$, $b \notin A$, such that 
$$
\left| N^c(A) \cap N(b) \right| \le 2q,
$$
where $q := 3(k+1)/\xi = O(1)$. The robber might be afraid of going to dangerous vertices because if she does so, the cop can try to move to the threatening set and the robber has at most $2q$ vertices to escape to (possibly none!). However, if the total number of dangerous vertices is, say,  smaller than $q$, the robber can easily win. Indeed, regardless of the starting positions of the cops, at least $\xi n \ge q$ vertices are not dominated by the set of vertices occupied by the cops, and less than $q$ of them are dangerous. So the robber can start on a vertex that is not dangerous and not adjacent to any of the cops. We will call any such vertex \emph{safe} (with respect to the current position of cops).

Now, suppose that the robber occupies a safe vertex $v$. No matter where the cops move to, since $v$ is not dangerous, there are always more than $2q$ vertices not adjacent to any of the cops the robber can go to. Since the number of dangerous vertices is less than $q$, at least one of them is safe and she can go there. This shows that the robber will be able to stay away from dangerous vertices forever and visit only safe vertices. 

It remains to show that \emph{wesp} there are at least $q$ dangerous vertices. Suppose that $b_i$ is dangerous because of a threatening set $A_i = \{a_1^i, a_2^i, \ldots, a_k^i \}$ with
$$
\left| N^c(A_i) \right| = t_i \ge \delta_k \ge \xi n,
$$
but only at most $2q$ neighbours of $b_i$ are in $N^c(A_i)$, $i \in \{1, 2, \ldots, q\}$. Since there are only $O(n^{q(1 + k + 2q)}) = n^{O(1)}$ configurations to consider, we may focus on one. It may happen that $b_i = a_\ell^j$ for some $\ell$ and $j$. However, we can always select a subset of $b_i$'s of cardinality $q/(k+1) = 3/\xi$ without this property (that is, all vertices are distinct). This is a desired situation, since we want to be able to expose a lot of potential edges and insist on not generating any. To get an upper bound, we focus only on exposing edges between $b_i$'s and $N^c(A_i)$. Hence, we get that the probability that there are at least $q$ dangerous vertices is at most
$$
\prod_{i=1}^{q} \left( (1/2)^{t_i - O(1)} \right)^{\frac {q}{k+1}} \le \left( (1/2)^{\xi n - O(1)} \right)^{\frac {q}{k+1}} \le \left( (1/2)^{2 \xi n / 3} \right)^{\frac {3}{\xi}} = 2^{-2n},
$$
and so the property holds \emph{wesp} and the proof is finished.

\end{document}